\newtheorem{thm}{Theorem}[section]
\theoremstyle{definition}
\theoremstyle{remark}
\numberwithin{equation}{section}
\begin{document}


\title{Studies of entropy measures concerning the gaps of prime numbers}


\author{Arturo Ortiz Tapia}
\address{Mexican Petroleum Institute}
\email{aortizt@imp.mx}


\author{Hans Henrik St\o{}leum}



\begin{abstract}
The Shannon entropy is used as a basis for applying different lemmas and conjectures concerning the set of gaps between prime numbers $G_p$, thus estimating several measures of it. The same procedures are applied to artificially created number sets, to compare the size of their entropy against $G_p$.
\end{abstract}


 \maketitle


\tableofcontents

\section{Introduction}
Shannon's (information) entropy is based on probabilities. If a distribution of probabilities is known, it can be estimated using the formula for discrete distributions or for continuous distributions \cite{fouque2011close}.

Why entropy bounds may not be good enough? Previous works on the generation of prime numbers, such as \cite{brandt1992generation,joye2006fast,maurer1995fast}, provide a proof (based on rather strong assumptions) that the output distribution of their algorithm has an entropy not much smaller than the entropy of the uniform distribution. This is a reasonable measure of the inability of an adversary to guess which particular prime was output by the algorithm, but it doesn't rule out the possibility of gaining some information
about the generated primes. In particular, it doesn't rule out the existence of an efficient distinguisher between the output distribution and the uniform one. For example \cite{joye2006fast}, let $H_{max}...$ nbit prime. $n \geq 256 $
\begin{equation}
H_{max} - H < \frac{1-\gamma}{\log 2} = 0.609949
\end{equation}
$\gamma$ is the Euler-Mascheroni constant.
The entropy loss with respect to a perfectly uniform generation is less than 0.61 bit for any prime bit length.\\

Therefore the central part of calculating Shannon's entropy rest upon how does one obtain the probabilities.
For example \cite{minculeta2011entropy} uses 
\begin{equation}
H(n) = \log \Omega (n) - \frac{1}{\Omega (n)} \sum_{i=1}^{r} a_i \log a_i
\end{equation}
where $\Omega (n)$ is the sum of the total number of prime factors of the natural number $n$ and $a_i$ is the multiplicity of each one of those prime factors. In particular for $n=2 \cdot 3^{2}\cdot 5^{3}$ the entropy is estimated as
\begin{equation}\label{EcShannon1}
H(n) = \log 6 - \frac{1}{6}(2\log 2 + 3 \log 3) \approx 1.011
\end{equation}

In this work it usage will be made of several measures like Eq.\ref{EcShannon1} to prove that the entropy of a given amount of gaps between prime numbers is less than the entropy of a uniform distribution of a similar quantity of natural or real numbers. 

%

\section{Entropy of Real Numbers}

Shannon's entropy is defined as \cite{shannon1948mathematical}

\begin{equation}
H(x)=-\sum_{i=1}^{n} P(x_i)\log_b P(x_i)
\end{equation}
where $P(x_i)$ is the probability mass funcion of the discrete random variable $X$ with possible values $\{x_1,\cdots,x_n\}$ and $\log_b$ is the logarithm in base $b$ used.

For the continuous case we have \cite{park2009maximum}

\begin{equation}\label{EqShannonContinuous}
h(X)=-\int_{\mathbb{X}}f(x)\log f(x)\mathrm{d}x
\end{equation}

where $f$ is a probability density function whose support is a set $\mathbb{X}$

For the purposes of this work, suppose it is desired to measure the entropy of a set with a uniform distribution with a given amount of real numbers (a set of random reals $R_{real}$), and in order to make it congruent to any measure of entropy with prime numbers, the support set $\mathbb{X} = [2,N_p]$, where $N_p$ is a given number of primes up to a given bound. Using Eq.\ref{EqShannonContinuous}, the entropy of real numbers can be defined as 

\begin{equation}
H(R_{real})=-\int_2^{Np}\frac{1}{Np-2}\log \left( \frac{1}{Np-2}\right)dR_{real} =\log(Np-2)
\end{equation}

By the Prime Number Theorem \cite{hardy1916contributions}, the amount of prime numbers $N_p$ up to a given bound $x$ is
\begin{equation}\label{approxnumprimes}
N_p \sim \frac{x}{\ln x}\sim \pi (x)
\end{equation}
$\pi (x)$ the number of primes less than or equal to x, for any Real number x. Assume that for a given, concrete, measure, we have $x = x_{max}$, and using Eq.\ref{EqShannonContinuous}
\begin{equation}\label{EqEntropyReals}
H(Real) = -\int_2^{\frac{x_{max}}{\ln x_{max}}} \frac{1}{\frac{x_{max}}{\ln x_{max}} -2}\ln \left(\frac{1}{\frac{x_{max}}{\ln x_{max}} -2} \right)dx =\ln \left(\frac{x_{max}}{\ln x_{max}} -2\right)
\end{equation}
for a chosen maximal x.

Goldston, Pintz, and Yildirim \cite{goldston2009primes} under appropiate unproved conjectures, showed that there are infinitely many primes $P_n,\, P_{n+1}$ such that: $P_{n+1}-P_n < 16$. Therefore, applying Eq.(\ref{EqEntropyReals}) it can be shown thar 
\begin{equation}
H(Real)=\ln \left( \frac{x_{max}}{\ln x_{max}} -2\right) >16 
\end{equation}

infinitely often, approximately for $x_{max} > 67.3611$

\section{Usage of Cram\'er's conjecture}
Let $G(x)$ denotes the largest gap between consecutive primes below a given bound $x$ \cite{wolf2011some}:

%
%
%

\begin{equation}\label{Gx1}
G(x) \sim \log (x) \left(\log (x) - 2 \log \log(x)  + c \right);\, c=0.2778769 
\end{equation}

and $x = x_{max}$.
For large x, let us take Cram\'er's conjecture\cite{cramer1936order} 
\begin{equation}\label{EqCramersimlogsquare}
G(x) \sim \log^2 (x)  
\end{equation}      
so
\begin{eqnarray}\nonumber
H(G(x))=-\int_1^{x_{max}} \log^2 (x_{max})\log\left( \frac{1}{\log^2 (x_{max}-1) }\right) dx  &=& -\log^2 (x_{max}) \log \left( \frac{1}{\log^2 (x_{max}-1)}\right)(x_{max}-1) \\ \nonumber
 &=& \log^{-2} (x_{max}) \log \left( \frac{1}{\log^2 (x_{max}-1)}\right)(x_{max}-1) \\ \nonumber
 &=& \log \left( \frac{1}{\log^2 (x_{max}-1)}\right)(x_{max}-1) / \log^{2}
\end{eqnarray}

For comparison, we generate randomly uniform distributions of gaps from 2 to $G(x_{max})$
so 
\begin{eqnarray}\nonumber
H(Random\, gaps) &=& -\int_2^{G(x_{max})} \frac{1}{G(x_{max})-2}\log\left(\frac{1}{G(x_{max})-2}\right) dx\\\nonumber
&=& -\frac{1}{G(x_{max})-2} \log\left(\frac{1}{G(x_{max})-2}\right)(G(x_{max})-2)\\\nonumber
&=& -\log\left(\frac{1}{G(x_{max})-2}\right)\\\nonumber
&=&\log\left(G(x_{max})-2\right)
\end{eqnarray}

Thus resulting the inequality to be questioned
\begin{equation}
\log\left(\frac{x_{max}}{\log x_{max}} - 2\right) > \log\left(G(x_{max})-2\right) \, \forall x > x_{min} ?
\end{equation}
using Eq. \ref{Gx1} we obtain
\begin{equation}
\log\left(\frac{x_{max}}{\log x_{max}} - 2\right) > \log\left(\log (x_{max}) \left(\log (x_{max}) - 2 \log \log (x_{max}) + c \right) -2\right) 
\end{equation}
exponentiating both sides and scratching out the outermost "2"s we get

\begin{equation}
\frac{x_{max}}{\log x_{max}} > \log (x_{max}) \left(\log (x_{max}) - 2 \log \log (x_{max}) + c \right)
\end{equation}              
\begin{equation}
x_{max} > \log^2 (x_{max}) \left(\log (x_{max}) - 2 \log \log (x_{max}) + c \right)
\end{equation}
solving for $x_{max}$
\begin{equation}
x_{max} > 9.17162
\end{equation}
Meaning that for $x_{max} > 9.17162$, the entropy of the largest gap from those generated with a uniform distribution, is greater than the entropy of the largest gap between primes, below a given bound $x$, according to Cram\'er's conjecture.

Moreover, for a uniformly distribution of gaps from 2 to max gaps =  $G(x_{max})$, we will have always
\begin{equation}
\log\left(G(x_{max})-2\right)
\end{equation}               
whatever $G(x_{max})$ is; taking now $G(x_{max}) \sim \log^2 (x_{max})$ (Cramer's conjecture) for $x >>1$ (large x) we have the inequality
\begin{equation}
\log \left( \frac{x_{max}}{\log x_{max}} -2\right) > \log \left(  \log^2 (x_{max}) -2\right)
\end{equation}

and solving for $x_{max}$, we obtain
\begin{equation}
x_{max} > 93.3545
\end{equation}   

Now, let 
\begin{equation}
G(x_{max}) = \log (x_{max})\left( \log (x_{max}) + \log \log \log (x_{max})\right)
\end{equation}

\section{Heath-Brown conjecture}           
Using a conjecture in \cite{heath1982gaps} (assuming the validity of the Riemann Hypothesis), we have that $\log\left( G(x_{max})-2\right)$ again, we get
\begin{equation}
\log \left( \frac{x_{max}}{\log x_{max}} -2\right) > \log\left(\log (x_{max})\left( \log (x_{max}) + \log \log \log (x_{max})\right)-2\right)
\end{equation}

which is always true for $x >120.027$ (or for $5.69781 < x < 8.43901$).\\

\section{Granville's formula}
A. Granville's formula \cite{granville1995harald}, which claims to be true for infinitely many pairs of primes $P_n, \, P_{n+1}$ for which 

\begin{equation}
P_n, \, P_{n+1} = G(P_n) > 2 e^{-\gamma} \log^2 (P_n) = 1.12292...\log^2 (P_n)
\end{equation}
where $\gamma = 0.577216...$ is the Euler-Mascheroni constant. 
The problem with Granville's, and other similar results, is that the formula 
\begin{equation}\label{logOverlogminus2}
\log(x_{max}/\log x_{max}) -2
\end{equation}
 cannot be used for comparison of the entropy with the reals, because Eq.(\ref{logOverlogminus2}) is not in terms of $P_n$. Said in another way, Eq.(\ref{logOverlogminus2}) is not a formula saying something about how to find directly $P_n$, i.e., how large $x_{max}$ must be, so that $P_n$ is found. However there might a way to interpret this $P_n$ using a generalization of Cram\'er's conjecture (see next section).\\

\section{Using a Generalization of Cram\'er's conjecture}

%
 Assuming the Riemann Hypothesis, Cram\'er proved that \cite{sinha2010new}
   
\begin{equation}
P_{n+1}-P_{n} = \mathcal{O} \left( P_n^{0.5} \log P_n\right)
\end{equation} 
Compare with the other formula given above ($G(x_{max}) \sim \log^2 (x_{max})$ -Eq.\ref{EqCramersimlogsquare}-); this would lead to rethink a real number, the size of $P_n$, i.e., $x_{max}=P_n$; what is the probability of $P_n$ ?\\
As a consequence of the Prime Number Theorem (PNT), one gets an asymptotic expression for the $n^{th}$ prime number, denoted  by  $P_n$ \cite{ribenboim2004little}:
\begin{equation}\label{PNTconsequence}
P_n \sim n \log n
\end{equation}
where the right-hand-side of Eq.(\ref{PNTconsequence}) is the "size" of $P_n$ and $n$ is the index, or the ordered position in the sequence of primes, for $P_n$. Therefore we can take this as $x_{max}$. Go back to the entropy of Reals, taking $n \log n$ as the largest  number of primes, but now in terms of the index $n$.

\begin{eqnarray}
H(Real) &=& - \int_2^{\# \,of \,primes} \frac{1}{\#P -2} \log \left( \frac{1}{\#P -2}  \right) dReal \\\nonumber
        &=& -\frac{1}{\#P -2}\log \left( \frac{1}{\#P -2}  \right)(\#P -2) \\\nonumber
        &=&\log \left(\#P -2\right) \\\nonumber
\end{eqnarray}
we recall that PNT also says that the number of primes is approximately

\begin{equation}
\# primes \sim  \frac{x}{\log x}
\end{equation}
but now we are taking $x$ as $x_{max}$ and in turn this maximal number making it equal to the size of the largest prime, that is $x_{max}=n\log n$, so first

\begin{equation}
\log\left(\frac{x_{max}}{\log x_{max}} -2\right)
\end{equation}
and substituting the value of $x_{max}$ and by Eq.(\ref{PNTconsequence})
\begin{equation}
\log\left(\frac{n\log n}{\log (n\log n)}- 2\right) = \log\left(\frac{P_n}{\log P_n}-2\right) 
\end{equation}

and now this new formula for the size of entropy in terms of the index $n$ can be used for those formulas of the gaps that are in terms of $P_n$, being now $P_n \sim n \log n$.
We go back to Granville's formula given by Wolf, in terms of the Euler-Mascheroni constant.
\begin{equation}
G(P_n) > 2 e^{-\gamma} \log^2 (P_n) = 1.12292...\log^2(P_n)
\end{equation}
We use  $\log(G(P_n)-2)$ so we are saying that either
\begin{equation}
\log\left(\frac{P_n}{\log P_n}-2\right) > \log\left(2 e^{-\gamma} \log^2 (P_n)-2\right)
\end{equation}
or that (by \ref{PNTconsequence})
\begin{equation}
\log\left(\frac{n\log n}{\log (n\log n)}-2\right) > \log\left(2 e^{-\gamma} \log^2 (n\log n) -2\right)
\end{equation}
Solving numerically for $P_n\; ; P_n \in \mathbb{R}$ gives $P_n > 128.703$ and the next prime to this real number is 131.
The best current unconditional result for $P_{n+1} - P_n$ is $\mathcal{O}\left( P_n^{0.535}\right)$ due to R. Baker and G. Harman, 1996, so:
 \begin{equation}
\log\left(\frac{P_n}{\log P_n}-2\right) > \log\left(P_n^{0.535}-2\right)
\end{equation}
which renders $P_n > 3.6532$ or $P_n >5$
or, in terms of the index $n$
\begin{equation}
\log\left(\frac{n\log n}{\log (n\log n)}-2\right) > \log\left(\left(n\log n\right)^{0.535} -2\right)
\end{equation}

\section{Cram\'er's conjecture}
\begin{equation}
P_{n+1} - P_n =\mathcal{O}\left( P_n^{0.5} \log P_n\right)
\end{equation}
assuming the Riemann Hypothesis.
so the entropy inequality will be written as
\begin{equation}
\log\left(\frac{P_n}{\log P_n}-2\right) > \log\left(P_n^{0.5}\log P_n-2\right)
\end{equation}

so $P_n > 5503.66$ or $P_n > 5507$.
\section{Stronger Form  of Firoozbakt's Conjecture}
Sinha \cite{sinha2010new} deduces  a stronger form  of Firoozbakt's conjecture, from which it is deduced that 
\begin{equation}
P_{n+1} -P_n < \log^2 P_n - \log P_{n+1}
\end{equation}
the right hand side of the inequality will be taken as  $G(P_n)$.\\

Now we are saying that \cite{ribenboim2004little}
\begin{equation}\label{SinhaRibeboim}
\log\left(\frac{P_n}{\log P_n}-2\right) >\log\left(\log^2 P_n -2 \log P_{n+1} -2 \right)
\end{equation}
which is true for $P_n\geq 17\wedge P_{n+1}\geq 19$, or
\begin{equation}
\log\left(\frac{n\log n}{\log (n\log n)}-2\right) >\log\left(\log^2 (n\log n) -2 \log ((n+1)\log (n+1)) -2 \right)
\end{equation}
which is true for $n\geq 9$.\\

\section{Upper bound of Jaroma 2005}
Instead of Eq.(\ref{SinhaRibeboim}) the following can also be used \cite{jaroma2005upper}
\begin{equation}
P_n < (1.2)^k < P < (1.2)^{k+1}
\end{equation}
Therefore $P_{k+1} < (1.2)^{k+1}$, which we substitute in Eq.(\ref{SinhaRibeboim})
\begin{equation}\label{Jaroma1}
\log\left(\frac{(1.2)^n}{\log (1.2)^n}-2\right) >\log\left(\log^2 (1.2)^n -2 \log (1.2)^{n+1} -2 \right)
\end{equation}
Solving for $n$, we obtain $n \in \mathbb{+Z} \wedge n \geq 16$.\\

\section{Estimations based on Tschebychef function, Robin 1983}

We have been using 
\begin{equation}\label{basisIneq}
\log\left(\frac{P_n}{\log P_n}-2\right) > \log \left( G(P_n) \right)
\end{equation}
suppose now the following result \cite{robin1983estimation}
\begin{equation}\label{robinEq1983}
P_n \leq n\log n + n \left( \log \log n -0.9385 \right)
\end{equation}

for $n \geq 7022$
substituing the right hand side of Eq.(\ref{robinEq1983}) on the left hand side of Eq.(\ref{SinhaRibeboim})
\begin{equation}\label{lhsSinhaRibenboim}
\log\left(\frac{n\log n + n \left( \log \log n -0.9385 \right)}{\log n\log n + n \left( \log \log n -0.9385 \right)}-2\right) > \log\left(\log^2 P_n -2 \log P_{n+1} -2 \right)
\end{equation}

Even if we don't have a result like Eq.(\ref{robinEq1983}) for $P_{n+1}$, let us argue like Jaroma \cite{jaroma2005upper} and suppose that
\begin{equation}\label{robinJaromaNextP}
P_{n+1} \leq (n+1)\log (n+1) + (n+1) \left( \log \log (n+1) -0.9385 \right)
\end{equation}

substituting Eq.(\ref{robinEq1983}) and Eq.(\ref{robinJaromaNextP})  in Eq.(\ref{basisIneq}) we obtain

\begin{eqnarray}
\log\left(\frac{n\log n + n \left( \log \log n -0.9385 \right)}{\log n\log n + n \left( \log \log n -0.9385 \right)}-2\right) &>& \\\nonumber
 \log\left(\log \left(n\log n + n \left( \log \log n -0.9385 \right)\right)^2 - 2 \log (n+1)\log (n+1) + (n+1) \left( \log \log (n+1) -0.9385 \right) -2 \right)\\\nonumber
\end{eqnarray}

But this is true only for $16\leq n \leq 32$.\\

\section{Upper and lower bounds for gaps between primes}
\subsection{kontorovich-Zhang 2013}
Alex Kontorovich \cite{kontorovich2014levels} explains the basis of Zhang's theorem: 
In April 2013 Zhang \cite{zhang2014bounded} proved that the Bounded Gaps Conjecture is true.
In particular,
\begin{equation}
\liminf_{n \rightarrow \infty} (P_{n+1}-P_n) < 7\times 10^7
\end{equation}

The average gap $P_{n+1} - P_n$ is of size about $\log P_n$ (by PNT):
\begin{equation}
\forall x,\, x >> 1, \, \exists_{(P_{n+1},\, P_n)} \left((P_{n+1}-P_n) < 7\times 10^7\right)
\end{equation}
in the range $[x,\, 2x]$ \cite{zhang2014bounded}. This will be taken as an upper limit for calculating the entropy of gaps between prime numbers.\\

\subsection{Kontoyiannis 2008}
Ioannis Kontoyiannis \cite{kontoyiannis2008counting}, cites a paper of Chebyshev, 1852 where
\begin{equation}
C(n)\triangleq \sum_{P\leq n} \frac{\log P}{P}\sim \log n
\end{equation}
where the sum is over all primes not exceeding $n$, and furthermore proves that
\begin{equation}\label{EqKontoyiannis}
\liminf_{n \rightarrow \infty}\frac{C(n)}{\log n} =1,
\end{equation}
using as one of the arguments that the entropy contained in the the number of primes up to certain value, is inversely proportional to the number of primes within that range:
\begin{equation}
H(N)=\log\frac{1}{P(N)}
\end{equation}

\subsection{Perepelyuk 2013}

In \cite{perepelyuk2013distribution} it is mentioned that
\begin{equation}\label{Eqpereplyuk}
\liminf_{n \rightarrow \infty} \left(\frac{P_{n+1} - P_n}{\log P_n}\right) = 0 
\end{equation}
meaning that $\log P_n$ strictly bounds from below the size of the gaps. Eqns. \ref{EqKontoyiannis} and \ref{Eqpereplyuk} tell us that \emph{on average} one can expect that the lower bound of gaps between prime numbers is as low as possible. Since prime numbers are odd numbers, the smallest possible gap is 2 and using $7 \times 10^7$ as an upper bound, we now need to estimate a smooth envelope for prime gaps.\\

\subsection{Smooth envelope}

Using Merten's theorem \cite{Mertens1874} and PNT , it can be shown that
\begin{equation}
f(k) = \mathcal{O}(\log\log k)
\end{equation}
where 
\begin{equation}\label{prodf}
f(k) = \prod_{P|k,\, P>2}\frac{P-1}{P-2}
\end{equation}
($k$ even) and compare for different $k$. So $f(6) = 2>1 =f(8)$; so gaps of length 6 are asymptotically twice as common as gaps of length 8.

So, using Eq.(\ref{prodf}), for 6, we only have 3 as prime:
\begin{equation}
 \prod_{P|k,\, P>2}\frac{P-1}{P-2}=\frac{3-1}{3-2}=2
\end{equation}
8 is divided by 8, 4, 2 but 2 is the only prime, and cannot be used. We should have zero, but by definition the function resorts to 1. We have, then that 
\begin{equation}
f(k) = \prod_{P|k,\, P>2}\frac{P-1}{P-2}= \mathcal{O}(\log\log k)
\end{equation}
The right-hand-side of this last expression is our smooth "envelope", with $k$ meaning gaps of any length. By PNT and Merten's theorem, $G(x) > \log (x)$, and using the bounds $2$ and $7 \times 10^7$,  an estimate of the entropy of gaps between prime numbers is:
\begin{equation}
H(G(P))=-\int_2^{7 \times 10^7}\left(\frac{1}{\log\left(\log (k)\right)}\right)\log \left(\frac{1}{\log\left(\log (k)\right)}\right)dk
\end{equation}
(assuming smoothness)
Solving we obtain $2.57231 \times 10^7$.\\

Again, by PNT, $G(x) > \log (x)$. The $x$ in this last equation is the natural number, which has to be so large, we can accomodate the size of the maximum possible entropy of gaps.
We can go back to the formula for entropy of reals.
\begin{equation}
H(Real) = \log \left( \frac{x_{max}}{\log x_{max}}-2\right)
\end{equation}
for a chosen maximal $x$.
We know that $maxgap = 7 \times 10^7 = G(x) > \log (x)$; solving for $x$,
we obtain the exact number $x=e^{7 \times 10^7}$, meaning that, provided $x \geq \exp [7 \times 10^7]$, the entropy of the gaps will remain forever lower than any given real number.
For a uniform distribution of gaps, the entropy is
\begin{equation}
\log \left( G(x_{max}) -2\right)
\end{equation}
we know from \cite{zhang2014bounded} that $G(x_{max}) = 3 \times 10^7$, and that $G(x) > \log (x)$, so:
\begin{equation}
\log\left( (x_{max}) -2 \right)
\end{equation}
is the \textit{minimum} entropy attainable by a random gap generator, which doesn't "know" about the maximal gap. The entropy is therefore:
\begin{equation}
x =e^{2+e^{3\times 10^7}}
\end{equation}
and from there onwards the entropy of random gaps must be larger, and for $H(Real)$
\begin{equation}
\log\left(\frac{\log(x)}{\log(\log x)}-2\right) \geq 3\times 10^7
\end{equation}

For $x >>1$:
\begin{equation}
\log\left(\frac{\log(x)}{\log(\log x)}-2\right) \approx \log\left(\frac{\log(x)}{\log(\log x)}\right)
\end{equation}
which we require to be greater than $3\times 10^7$, so:
\begin{equation}
\frac{\log(x)}{\log(\log x)} \geq \exp [3\times 10^7]
\end{equation}
and we know that
\begin{equation}
\frac{\log(x)}{\log(\log x)} < \exp (\exp (x))
\end{equation}
therefore, it is sufficient that
\begin{equation}
x \geq \exp \left(\exp \left( \exp [3\times 10^7] \right) \right)
\end{equation}
for $H(Real)$ to be always greater than the entropy of the gaps between prime numbers.

\section{$H(P_{n+1}-P(n)=\min\{H(x_{k+1}-x_k)\},\,k\in \mathbb{R}$}

\begin{thm} The entropy of the gaps between prime numbers is smaller than any similar distribution made with random gaps, and of real numbers.
\end{thm}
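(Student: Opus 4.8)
The plan is to assemble into a single statement the inequalities established section by section in Sections~3--12. First I would fix the three quantities being compared. By Eq.~(\ref{EqEntropyReals}), the Shannon entropy of a uniform family of reals on $[2,N_p]$ with $N_p\sim x_{max}/\log x_{max}$ is $\log\!\left(x_{max}/\log x_{max}-2\right)$; the entropy of a uniform family of random gaps on $[2,G(x_{max})]$ is $\log\!\left(G(x_{max})-2\right)$; and the entropy of the actual prime gaps is obtained by substituting the smooth envelope
\[
f(k)=\prod_{P\mid k,\ P>2}\frac{P-1}{P-2}=\mathcal{O}(\log\log k),
\]
together with the bound $G(x)>\log x$ coming from PNT and Mertens' theorem, into Eq.~(\ref{EqShannonContinuous}), exactly as carried out for Zhang's admissible bound $7\times 10^{7}$.

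Second, I would collect the threshold results. Each of Sections~3--12 already yields an inequality of the form
\[
\log\!\left(\frac{x_{max}}{\log x_{max}}-2\right) > \log\bigl(G(x_{max})-2\bigr),
\]
valid for all $x_{max}$ (equivalently, all indices $n$ with $x_{max}=P_n\sim n\log n$) beyond an explicit bound: $x_{max}>9.17162$ for Cram\'er's $\log^{2}x$ form, $x_{max}>120.027$ for Heath-Brown's conditional bound, $P_n>128.703$ for Granville's $2e^{-\gamma}\log^{2}P_n$, $P_n>5$ for the unconditional Baker--Harman exponent $0.535$, $P_n\ge 17$ for Sinha's strong Firoozbakt form, $n\ge 16$ for Jaroma's $(1.2)^{k}$ bound, and the analogous ranges after Robin's estimate~(\ref{robinEq1983}). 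Since these are finitely many thresholds, let $x^{\ast}$ be their maximum, translated back through $P_n\sim n\log n$ wherever a result is phrased in $n$. For every $x_{max}>x^{\ast}$ all of these inequalities hold simultaneously, so $H(\mathrm{Real})>\log(G(x_{max})-2)=H(\mathrm{Random\ gaps})$ for every admissible model of $G$.

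Third, to pass from random gaps to the primes themselves I would invoke the maximum-entropy property of the uniform distribution: among all densities supported on the bounded interval $[2,G(x_{max})]$ --- bounded thanks to Zhang's theorem on bounded gaps --- the uniform one has the largest differential entropy, so $H(G(P))\le\log(G(x_{max})-2)<H(\mathrm{Real})$ once $x_{max}>x^{\ast}$, which chains the three quantities in the required order. The main obstacle is precisely this synthesis step: ``the entropy of the gaps between primes'' is not one number but depends on which conjectural envelope one adopts for $G$, so the delicate part is to verify that the finitely many thresholds of Sections~3--12 are genuinely mutually compatible --- that the supremum $x^{\ast}$ is finite and that the inequality survives every substitution $x_{max}=P_n=n\log n$ without the sign reversing. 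The secondary difficulty is to justify that the envelope $f(k)=\mathcal{O}(\log\log k)$ controls the empirical gap distribution from the side needed here, rather than merely describing the relative frequency of gaps of a fixed even length; this is where a rigorous argument, as opposed to the heuristic passage through Mertens' theorem, would have to be supplied.
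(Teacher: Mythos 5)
Your overall strategy --- aggregate the section-by-section thresholds into a single finite $x^{\ast}$ and then chain $H(G(P))\le H(\mathrm{Random\ gaps})<H(\mathrm{Real})$ --- is a faithful (and considerably more explicit) rendering of what the paper's one-sentence proof gestures at, and your second step matches the paper's argument exactly. The genuine gap is in your third step, and you have underestimated it by calling it a ``secondary difficulty.'' The maximum-entropy property of the uniform distribution applies only to probability densities supported on $[2,G(x_{max})]$, but the object the paper (and your proposal, following it) uses as ``the distribution of prime gaps'' is the envelope $1/\log\log k$, which is not normalized: $\int_2^{7\times 10^7}\frac{dk}{\log\log k}\neq 1$. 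Consequently the quantity $H(G(P))$ computed from it, namely $2.57231\times 10^7$, is not a differential entropy in the sense required, and it is in fact vastly \emph{larger} than $\log\left(G(x_{max})-2\right)=\log\left(7\times 10^7-2\right)\approx 18.06$. So the very inequality $H(G(P))\le\log\left(G(x_{max})-2\right)$ that your chain needs is contradicted by the paper's own numbers; the maximum-entropy argument cannot be invoked until one first exhibits a genuine probability density for the gap lengths (e.g.\ by normalizing the envelope) and re-estimates the resulting entropy from scratch --- nothing in the preceding sections supplies that estimate.

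A second, smaller but still real, gap: your step two quantifies over ``every admissible model of $G$,'' but the thresholds you list are each derived under a different, mutually incompatible hypothesis (Cram\'er's conjecture, the Riemann Hypothesis, Firoozbakt, Zhang's unconditional bound), and at least one of them --- the Robin-based estimate --- holds only on the bounded range $16\le n\le 32$, so it admits no threshold beyond which it is valid and cannot be absorbed into a finite maximum $x^{\ast}$. Taking the maximum of the thresholds therefore does not yield a statement valid for every model simultaneously; at best it yields, for each separate conjecture, a separate conditional inequality, which is also all the paper itself actually establishes.
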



\begin{proof}
Using the results of all sections, it is shown that at least above a certain measure, it can be certain that the entropy of gaps between prime numbers will remain smaller than any random distribution of gaps of similar size, or of real numbers of comparable size.
\end{proof}

 \bibliographystyle{siam} 
 \bibliography{EntropyGaps.bib}

\end{document}